\newtheorem{thm}{Theorem}
\newtheorem{lem}{Lemma}
\newtheorem{cor}{Corollary}
\newcommand{\be}{\begin{equation}}
\newcommand{\ee}{\end{equation}}
\newcommand{\Dt}{\Delta t}
\newcommand{\dx}{\Delta x}
\newcommand{\dt}{\Delta t}
\newcommand{\aij}{\alpha_{i,j}}
\newcommand{\bij}{\beta_{i,j}}
\newcommand{\sspcoef}{\mathcal{C}}
\newcommand{\DtFE}{\Dt_{\textup{FE}}}
\newcommand{\tDtFE}{\tilde{\Dt}_{\textup{FE}}}
\newcommand{\ft}{\tilde{F}}
\title{Downwinding for Preserving Strong Stability in Explicit Integrating Factor Runge--Kutta Methods}
\author{%
Leah Isherwood\thanks{Mathematics Department, University of Massachusetts Dartmouth, 285 Old Westport Road,
North Dartmouth MA 02747.},
Zachary J. Grant\footnotemark[1], and 
Sigal Gottlieb\footnotemark[1]
}
\begin{document}
\maketitle


\bibliographystyle{siam}

\begin{abstract} 
Strong stability preserving (SSP)  Runge--Kutta methods are  desirable  when evolving in time  
problems that have discontinuities or sharp gradients and require nonlinear non-inner-product 
stability properties to be satisfied. Unlike the case for $L_2$ linear stability, implicit methods do
not significantly alleviate the time-step restriction when the SSP property is needed. For this
reason, when handling problems with  a linear component that is stiff and a  nonlinear component that is not,
SSP integrating factor Runge--Kutta methods may offer an attractive alternative to traditional time-stepping methods.
The strong stability properties of integrating factor Runge--Kutta methods where the 
transformed problem is evolved with an explicit SSP Runge--Kutta method with non-decreasing abscissas
was recently established. However, these methods typically have smaller SSP coefficients (and therefore
a smaller allowable time-step) than the optimal SSP Runge--Kutta methods, which often have
some decreasing abscissas. In this work, we consider the use of downwinded spatial operators to
preserve the strong stability properties of integrating factor Runge--Kutta methods  where the
Runge--Kutta method has some decreasing abscissas. We present the SSP theory for this approach 
and present numerical evidence to show that such an approach is feasible and performs as expected.
However, we also show that  in some cases the integrating factor approach
with explicit SSP Runge--Kutta methods with non-decreasing abscissas performs nearly as well, 
 if not better, than with explicit SSP Runge--Kutta methods with downwinding.
In conclusion, while the downwinding approach can be rigorously shown to guarantee the  SSP property for a larger time-step,
 in practice  using the integrating factor approach
by including downwinding as needed with   optimal explicit SSP Runge--Kutta methods does not 
 necessarily provide significant benefit over using  explicit SSP Runge--Kutta methods with non-decreasing abscissas.
 \end{abstract}

\begin{center}
{\bf This paper is in honor of Prof. Chi-Wang Shu's sixtieth birthday. \\
His pioneering work on SSP methods and
his observations on downwinding inspired this paper. We wish him many many more productive, happy, and healthy years
to inspire many mathematicians.}
\end{center}

\section{Introduction\label{sec:intro}}
When numerically solving a hyperbolic conservation law of the form 
\begin{eqnarray} \label{PDE}
U_t +f(U)_x = 0,
\end{eqnarray}
specially designed spatial discretizations are used to handle the discontinuities in 
the solution that sometimes  arise. These spatial discretizations typically satisfy some 
nonlinear non-inner-product strong stability properties when coupled with forward Euler time-stepping 
\cite{SSPbook2011}. However, in practice we wish to use higher order time discretizations, which preserve the strong stability 
properties of the spatial discretization coupled with forward Euler.

Explicit strong stability preserving (SSP) Runge--Kutta methods were  first developed in \cite{shu1988b,shu1988}
to evolve the semi-discretization
\begin{eqnarray}
\label{ode}
u_t = F(u),
\end{eqnarray}
resulting from approximating $f(u)_x$ 
 with a total variation diminishing (TVD) spatial discretization.
TVD spatial discretizations are specially designed to 
ensure that the forward Euler time-step is strongly stable  
\begin{eqnarray} \label{FEstrongstability}
\|u^{n+1} \| = \| u^n + \dt F(u^{n}) \| \leq \| u^n \| 
\end{eqnarray}
under some  step size restriction
\begin{eqnarray} \label{FEcond}
0 \leq \dt \leq \DtFE.
\end{eqnarray}
We wish to guarantee  that the same type of  strong stability property 
\begin{eqnarray} \label{monotonicity}
\|u^{n+1} \| \leq \|u^{n} \| 
\end{eqnarray}
is still satisfied when the TVD spatial discretization is coupled with a higher order time-stepping method. To do this,  we 
use the fact that many higher order time discretization can be written as a convex combination of forward Euler steps.

It is simple to show that if we can  re-write a higher order time discretization as a convex combination of forward Euler steps,
then we can ensure that any convex functional property \eqref{monotonicity} that is satisfied 
by the forward Euler method will still be satisfied by the  higher order time discretization, perhaps under a different time-step.
For example, an  $s$-stage explicit Runge--Kutta method can be written as:
\begin{eqnarray}
\label{rkSO}
u^{(0)} & =  & u^n, \nonumber \\
u^{(i)} & = & \sum_{j=0}^{i-1} \left( \aij u^{(j)} +
\dt \bij F(u^{(j)}) \right), \; \; \; \; i=1, . . ., s\\
 u^{n+1} & = & u^{(s)} . \nonumber
\end{eqnarray}
Each stage can be written as
\[ u^{(i)}  =  \sum_{j=0}^{i-1} \aij \left(  u^{(j)} + \dt \frac{\bij}{\aij}  F(u^{(j)}) \right)  \]
provided that  a given $\aij$ is zero only if its corresponding $\bij$ is zero. Recall that 
for consistency, we must have $\sum_{j=0}^{i-1} \aij =1$, so that as long as the
coefficients $\aij$ and $\bij$ are all non-negative, 
each stage can be rearranged into a convex combination of forward Euler steps.
Thus we have 
\[
\| u^{(i)}\|  =   \left\| \sum_{j=0}^{i-1} \left( \aij u^{(j)} + \dt \bij F(u^{(j)}) \right) \right\|   \nonumber 
 \leq   \sum_{j=0}^{i-1} \aij  \, \left\| u^{(j)} + \dt \frac{\bij}{\aij} F(u^{(j}) \right\|  
 \leq  \|u^n\| , \]
(where the final inequality follows from  \eqref{FEstrongstability} and  \eqref{FEcond}),
for any time-step that satisfies
\begin{eqnarray}
\dt \leq \min_{i,j} \frac{\aij}{\bij} \DtFE.
\end{eqnarray}
If any of the $\beta$'s are equal to zero,  we consider the corresponding ratio to be infinite.

In the case where a particular $\bij<0$,  the SSP property can still be guaranteed 
provided that we modify the spatial discretization for these instances  \cite{shu1988}.
When $\bij$ is negative, $\bij F(u^{(k)})$ is replaced by $\bij \ft(u^{(k)})$,
where $\ft$ approximates the same spatial derivative(s) as $F$, but the 
strong stability property
$\|u^{n+1}\| \leq \|u^n\|$
 holds  for the first order Euler scheme, solved backward in time, i.e.,
\begin{eqnarray}
\label{1.10}
u^{n+1} = u^n - \dt \ft (u^n)
\end{eqnarray}
This can be achieved, for hyperbolic conservation laws, by solving the
negative in time version of \eqref{PDE},
\begin{eqnarray*}
U_t -  f(U)_x = 0 . 
\end{eqnarray*}
Numerically, the only difference is the change of the upwind direction.
Thus, if $\aij \geq 0$, all the intermediate stages $u^{(i)}$ in \eqref{rkSO}
are  convex combinations of
backward in time Euler  and forward Euler operators, 
with $\dt$ replaced by $\frac{|\bij|}{\aij} \dt$. Following the same reasoning as above, any strong
stability bound satisfied by the backward in time and forward in time
Euler methods will then be preserved by the Runge--Kutta method \eqref{rkSO}
where $F$ is replaced by $\ft$ whenever the corresponding $\beta$ is negative.

Clearly then, if we can re-write an explicit Runge--Kutta method as a  convex combination
of forward Euler steps (or, in the downwinded case, of forward Euler and backward in time Euler steps), 
the monotonicity condition  \eqref{FEstrongstability}  will be {\em preserved} by the higher-order time 
discretizations, under a modified   time-step restriction 
$\Dt \le \sspcoef \DtFE$ 
where  $\sspcoef = \min_{i,j} \frac{\aij}{|\bij|}$.   As long as $ \sspcoef  >0$, the method is 
called {\em strong stability preserving} (SSP) with {\em SSP coefficient}  \;  $\sspcoef$ \cite{shu1988b}.  
Methods that use the downwinded  operator $\ft$ as well as the operator $F$ are
 called downwinded methods \cite{SSPbook2011}.
 
In the original papers, the  term $\| \cdot \|$ in Equation  \eqref{FEstrongstability} above  represented
the total variation semi-norm, and these methods were known as TVD time-stepping methods
  \cite{shu1988b, shu1988}.
However, the strong stability  preservation property holds
for any semi-norm, norm, or convex functional, as determined by the design of the spatial discretization, provided {\em only}
that the forward Euler condition   \eqref{FEstrongstability}  holds, and that the time-discretization 
can be decomposed into a convex combination of forward Euler and backward in time Euler steps  with   $ \sspcoef  >0$.

The convex combination condition is not only a sufficient condition for strong stability preservation,
 it is also necessary for  strong stability preservation
 \cite{SSPbook2011, kraaijevanger1991,spijker2007}. This means that 
if a method cannot be decomposed into a convex combination of forward Euler steps, then  we can always find 
some ODE with some initial condition such that the forward Euler condition is satisfied but the method
does not satisfy the strong stability condition for any positive time-step \cite{SSPbook2011}.

Not every method can be decomposed into convex combinations of forward Euler steps with
$\sspcoef >0$. For this reason, explicit SSP Runge--Kutta methods  
cannot exist for order $p>4$   \cite{kraaijevanger1991,ruuth2001}.
Furthermore, the SSP requirement is quite restrictive, so that  all explicit $s$-stage Runge--Kutta 
methods have an SSP bound $\sspcoef \leq s$ \cite{SSPbook2011}. Moreover,
this upper bound cannot usually be attained. Nevertheless, many efficient explicit SSP Runge--Kutta methods 
exist and are discussed in Section \ref{sec:background}.
Implicit  SSP Runge--Kutta methods have been an active area of investigation as well; these methods have an
order barrier of $p \leq 6$, and seem to exhibit an SSP bound $\sspcoef \leq 2 s$ \cite{SSPbook2011}. 
This disappointing result limits the interest in implicit SSP Runge--Kutta methods, as well as in
implicit-explicit SSP Runge--Kutta methods, studied in \cite{IMEX}.
 
Given a semi-discretized problem of the form
 \[ u_t = Lu + N(u) \]
 where $L$ is a linear operator that significantly restricts the time-step, we typically turn to 
implicit-explicit methods to alleviate the time-step restriction. However, when the 
 time-step is restricted  due to nonlinear non-inner-product stability considerations, SSP methods are necessary,
 but  implicit-explicit SSP Runge--Kutta methods do not significantly alleviate the time-step restriction \cite{IMEX}.
This motivated our initial  investigation into integrating factor methods \cite{SSPIFRK2018}, where the 
 linear component $L u$ is handled exactly, and then 
 the allowable time-step depends only upon the nonlinear component $N(u)$. 
 In \cite{SSPIFRK2018} we discussed the conditions under which 
this process  guarantees that the strong stability property \eqref{monotonicity} is preserved. 
In that work, we showed that if we  step the transformed problem forward using 
an SSP Runge--Kutta method where
the abscissas (i.e. the time-levels approximated by each stage) are non-decreasing, 
we obtain a method that preserves the desired  strong stability property.
 These non-decreasing
abscissa SSP Runge--Kutta methods usually have smaller SSP coefficients than the
optimal explicit  SSP Runge--Kutta methods. 
However, there is an alternative approach inspired by classical SSP theory: for the stages where the abscissas are 
decreasing, we can replace the operator $L$ in the exponential with the downwind operator $\tilde{L}$, and 
the resulting method will be SSP with the original SSP time-step.

In the current work we discuss the downwinding approach in the context of  integrating factor Runge--Kutta methods.
In our case, the Runge--Kutta method does not have negative coefficients, but  some stages  the
difference of abscissas  is  negative (i.e.  some of the abscissas are decreasing). 
To preserve the SSP property we can
replace the operator $L$ with the downwind operator $\tilde{L}$ for cases where the abscissas are decreasing.
The extra cost of computing the exponential for $\tilde{L}$ can be significant if needed at each time-step; however, if
the exponential operators for both $L$ and $\tilde{L}$ are pre-computed, the additional cost is negligible.
In this paper we rigorously prove this approach to be SSP  and show how it works
on simple test cases. Our conclusions are that while this approach is viable, it is not necessarily more
efficient than  the integrating factor approach using the non-descreasing abscissa Runge--Kutta methods described in 
 \cite{SSPIFRK2018}, particularly if the exponential operators are not pre-computed.

In Section  \ref{sec:SSPIF} we provide the SSP theory for integrating factor Runge--Kutta methods.
In Section \ref{sec:background} we review the optimal explicit SSP Runge--Kutta methods that serve as a basis for the
SSP integrating factor Runge--Kutta methods, and provide their SSP coefficients. 
Next, in Section \ref{sec:test} we demonstrate through numerical examples the need for downwinding
in the case where the explicit Runge--Kutta method has some decreasing abscissas, and compare the
use of downwinding  to the  non-decreasing abscissa approach.
We also show that although including downwinding changes the ODE, so that time-refinement alone will not
show convergence,  refinement in both space and time will show  convergence to the solution of the PDE.
We conclude that downwinding is a numerically viable approach that can be rigorously shown to preserve the 
strong stability properties when used with an integrating factor Runge--Kutta approach, but may not be
more beneficial than using the integrating factor approach with Runge--Kutta methods that have only non-decreasing abscissas.

\section{SSP theory for explicit integrating factor Runge--Kutta  methods} \label{sec:SSPIF} 
We consider a hyperbolic PDE whose semi-discretization results in an ODE system of the form
\begin{eqnarray}
u_t = Lu + N(u)  
\end{eqnarray}
with a nonlinear component $N(u)$ that satisfies 
\begin{eqnarray}  \label{nonlinearFEcond}
\| u^n + \dt N(u^n) \| \leq \|u^n\| \qquad \mbox{for} \qquad \dt \leq \DtFE
\end{eqnarray}
and a linear constant coefficient component $L u $ that satisfies 
\begin{eqnarray} \label{linearFEcond}
\| u^n + \dt L u^n \| \leq \|u^n\| \qquad \mbox{for} \qquad \dt \leq \tDtFE
\end{eqnarray}
for some convex functional  $\| \cdot \|$. 
In this case, the allowable time-step for the linear component is significantly smaller than the one
for the nonlinear component, $ \tDtFE <<  \DtFE$.   
In such cases, stepping forward using an explicit SSP Runge--Kutta method, or even an implicit-explicit (IMEX)
SSP Runge--Kutta method will result in severe constraints on the allowable time-step. We seek a time-stepping approach
that alleviates the time-step restriction while preserving the  monotonicity property $\| u^{n+1} \| \leq \| u^n \|$.

As in \cite{SSPIFRK2018}  we wish to treat the  linear part exactly using an integrating factor approach 
\[  e^{-L t } u_t - e^{-L t }  Lu = e^{-L t } N(u)  \longrightarrow
\left( e^{-L t } u \right)_t = e^{-L t } N(u). \]
Defining $w = e^{-L t } u $ gives the ODE system
\begin{eqnarray} \label{IF_ODE}
w_t  = e^{-L t } N(e^{L t } w)   = G(w),
\end{eqnarray}
which we then evolve in time using an explicit Runge--Kutta method of the form \eqref{rkSO}.
This approach is known as a Lawson-type method \cite{lawson1967}.

Each stage $u^{(i)}$ of \eqref{rkSO} becomes
\[
e^{-L t_i} u^{(i)}   =   \sum_{j=0}^{i-1} \left( \aij e^{-L t_j}  u^{(j)} + \dt \bij e^{-L t_j}  N(u^{(j)}) \right),  \]
or
\begin{eqnarray}
u^{(i)}   & =  &   \sum_{j=0}^{i-1} \left( \aij e^{L (t_i-t_j) }  u^{(j)} + \dt \bij e^{L ( t_i-t_j)}  N(u^{(j)}) \right)  \\
& =  &   \sum_{j=0}^{i-1} \left( \aij e^{L (c_i-c_j) \dt }  u^{(j)} + \dt \bij e^{L ( c_i-c_j)\dt}  N(u^{(j)}) \right) . 
\end{eqnarray}
This stage corresponds to the solution at time $t_i = t^n + c_i \dt$,
where each $c_i$ is the abscissa of the method at the $i$th stage.

In our prior work, we used the two properties \eqref{nonlinearFEcond} and \eqref{linearFEcond} 
to establish the SSP properties of an integrating factor Runge--Kutta method
in the case where the abscissas are non-decreasing. In this work, we wish to allow decreasing abscissas in order to 
enlarge the SSP coefficient. For this purpose, 
we also define the downwinded operator $\tilde{L}$ which approximates the same term in the PDE as
$L$, but satisfies the strong stability condition:
\begin{eqnarray} \label{linearFEcond2}
\| u^n - \dt \tilde{L} u^n \| \leq \|u^n\| \qquad \mbox{for} \qquad \dt \leq \tDtFE.
\end{eqnarray}
For hyperbolic partial differential equations, this is accomplished by using the spatial discretization that is stable for 
a downwind problem. This approach is similar to the one employed  in the classical SSP literature, where negative coefficients $\bij$ may be allowed if the 
corresponding operator is replaced by a downwinded operator. However, in our case all the coefficients of the Runge--Kutta methods are nonnegative,
and the negative terms appear only in the exponential, due to decreasing abscissas.

\begin{thm} \label{thm:exp_FEcond}
{\em (From \cite{SSPIFRK2018})}
If  a  linear operator $L$  satisfies  \eqref{linearFEcond} for some value of $\tDtFE > 0 $,
then 
\begin{equation} \label{EXPcondition}
\| e^{\tau L} u^n \|  \leq \| u^n \|   \; \; \; \; \forall \; \tau \geq 0 . 
\end{equation}
\end{thm}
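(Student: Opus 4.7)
The plan is to realize the exponential as a limit of forward Euler iterations so that the per-step forward Euler bound \eqref{linearFEcond} can be applied repeatedly. Concretely, I would start from the classical identity
\begin{equation*}
e^{\tau L} u^n \;=\; \lim_{m\to\infty}\Bigl(I+\tfrac{\tau}{m}L\Bigr)^{m} u^n,
\end{equation*}
which holds for any bounded (or, more generally, for the finite-dimensional) linear operator $L$ and any $\tau \geq 0$. For a fixed $\tau \geq 0$, choose $m$ large enough that $\tfrac{\tau}{m} \leq \tDtFE$; such an $m$ exists because $\tDtFE>0$.

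Next I would iterate the forward Euler hypothesis \eqref{linearFEcond} with step size $\Dt = \tau/m$. Setting $v^{(0)} = u^n$ and $v^{(k+1)} = (I + \tfrac{\tau}{m} L) v^{(k)}$, each step satisfies $\|v^{(k+1)}\| \leq \|v^{(k)}\|$, so by induction
\begin{equation*}
\Bigl\|\bigl(I + \tfrac{\tau}{m} L\bigr)^m u^n\Bigr\| \;\leq\; \|u^n\|.
\end{equation*}
Since the norm (or convex functional) $\|\cdot\|$ is continuous, passing to the limit $m\to\infty$ inside the norm yields $\|e^{\tau L} u^n\| \leq \|u^n\|$, which is precisely \eqref{EXPcondition}.

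The only delicate point is the justification that $\|\cdot\|$ is continuous enough to commute with the limit. For a genuine norm on a finite-dimensional space this is immediate, and for a general convex functional arising from a seminorm it follows from convexity together with the obvious bound $\|(I+\tfrac{\tau}{m}L)^m u^n\| \leq \|u^n\|$, which already lies on the correct side of the inequality and is preserved under lower-semicontinuous limits. Thus the main (and essentially the only) obstacle is the technical continuity argument; the algebraic heart of the proof is the elementary observation that $e^{\tau L}$ is a limit of repeated forward Euler steps, each of which is nonexpansive under the hypothesis on $L$.
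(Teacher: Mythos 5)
Your proof is correct, and it is worth noting that the paper itself does not actually supply a proof of Theorem~\ref{thm:exp_FEcond}: it defers entirely to \cite{SSPIFRK2018}. The argument given there takes a genuinely different (though closely related) route from yours. One writes, for $\tau \geq 0$,
\begin{equation*}
e^{\tau L} \;=\; e^{-\tau/\tDtFE}\, e^{(\tau/\tDtFE)\left(I + \tDtFE L\right)}
\;=\; \sum_{k=0}^{\infty} e^{-\tau/\tDtFE}\, \frac{(\tau/\tDtFE)^k}{k!}\,\bigl(I + \tDtFE L\bigr)^k u^n \Big/ u^n\text{-free form},
\end{equation*}
i.e.\ $e^{\tau L}u^n$ is an infinite \emph{convex combination} (nonnegative Poisson weights summing to one) of the vectors $(I+\tDtFE L)^k u^n$, each of which satisfies $\|(I+\tDtFE L)^k u^n\|\leq\|u^n\|$ by $k$-fold application of \eqref{linearFEcond}; convexity of $\|\cdot\|$ then yields \eqref{EXPcondition} directly. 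Your Euler-product argument, $(I+\tfrac{\tau}{m}L)^m u^n \to e^{\tau L}u^n$ with each factor nonexpansive once $\tau/m\leq\tDtFE$, reaches the same conclusion and is equally standard; the trade-off is only where the limiting step lives (passing $m\to\infty$ through $\|\cdot\|$ for you, versus passing from partial sums to the full series for the convex-combination proof). In the setting of this paper both are unproblematic: $L$ is a matrix and a convex functional that is finite everywhere on a finite-dimensional space is automatically continuous, so the ``delicate point'' you flag is in fact automatic. The one thing you should state explicitly is that \eqref{linearFEcond} is assumed to hold for \emph{every} vector (it is a property of the operator $L$, not of the particular $u^n$), since your induction applies it to the iterates $v^{(k)}$ rather than to the initial datum alone.
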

This theorem was proved in  \cite{SSPIFRK2018}. Clearly, if we simply replace $ L$ with $- \tilde{L}$, and the corresponding 
condition \eqref{linearFEcond} with \eqref{linearFEcond2} we obtain a
similar result for the downwinded operator:

\begin{cor} \label{thm:exp_FEcond2}
If  a  linear operator $\tilde{L}$  satisfies  \eqref{linearFEcond2} for some value of $\tDtFE > 0 $,
then 
\begin{equation} \label{EXPcondition2}
\| e^{-\tau \tilde{L}} u^n \|  \leq \| u^n \|   \; \; \; \; \forall \; \tau \geq 0 . 
\end{equation}
\end{cor}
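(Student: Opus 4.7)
The plan is to reduce Corollary \ref{thm:exp_FEcond2} to Theorem \ref{thm:exp_FEcond} by a simple change of sign on the linear operator, exactly as the sentence preceding the statement hints. First I would introduce the auxiliary operator $L := -\tilde{L}$ and check that the hypothesis of Theorem \ref{thm:exp_FEcond} applies to $L$. That check is essentially a relabeling: the assumed bound \eqref{linearFEcond2} reads $\|u^n - \dt \tilde{L} u^n\| \leq \|u^n\|$ for $\dt \leq \tDtFE$, which is the same as
\[
\| u^n + \dt L u^n \| \leq \|u^n\| \qquad \text{for } \dt \leq \tDtFE,
\]
i.e.\ condition \eqref{linearFEcond} for the operator $L$ with the same forward-Euler step-size bound.

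Next I would invoke Theorem \ref{thm:exp_FEcond} directly to conclude $\|e^{\tau L} u^n\| \leq \|u^n\|$ for every $\tau \geq 0$. The final step is to translate this statement back into the original operator. Since $L$ and $\tilde{L}$ are constant linear operators and scalar multiplication commutes with itself inside the power series defining the exponential, $e^{\tau L} = e^{-\tau \tilde{L}}$, and the desired inequality \eqref{EXPcondition2} follows immediately.

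There is no real obstacle in this argument; the whole proof is a one-line reduction. The only thing worth stating explicitly is the identity $e^{\tau(-\tilde{L})} = e^{-\tau\tilde{L}}$, which should be mentioned so the proof is self-contained but needs no further justification beyond the definition of the matrix (or operator) exponential. Consequently I would present the result as a short corollary-style proof consisting of (i) the substitution $L = -\tilde{L}$, (ii) the verification that \eqref{linearFEcond} holds for $L$, and (iii) the appeal to Theorem \ref{thm:exp_FEcond} followed by the exponential identity.
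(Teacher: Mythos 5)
Your reduction is exactly the paper's argument: the authors also obtain the corollary by substituting $L = -\tilde{L}$ into Theorem \ref{thm:exp_FEcond} and replacing condition \eqref{linearFEcond} with \eqref{linearFEcond2}. Your write-up is correct and, if anything, slightly more explicit than the paper in spelling out the identity $e^{\tau(-\tilde{L})} = e^{-\tau\tilde{L}}$.
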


\begin{lem} \label{stab}
{\em (From \cite{SSPIFRK2018})}
Given a  linear operator $L$ that satisfies  \eqref{EXPcondition}
and a (possibly nonlinear) operator $N(u)$ that satisfies \eqref{nonlinearFEcond}  
for some value of ${\Delta t}_{FE} \geq 0 $, we have 
\begin{equation}
\| e^{\tau L} ( u^n  + \dt N (u^n )) \|  \leq \| u^n \|   \; \; \; \; \forall \dt \leq \DtFE,  \; \; \; \mbox{provided that} \; \; \tau \geq 0. 
\end{equation}
\end{lem}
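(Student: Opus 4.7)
The plan is to derive the bound by chaining two already-established norm inequalities: the exponential contractivity \eqref{EXPcondition} from Theorem \ref{thm:exp_FEcond}, and the nonlinear forward Euler condition \eqref{nonlinearFEcond}. Neither condition interacts with the other in any delicate way---the linear part is absorbed into the exponential, and the nonlinear part is a single forward Euler step---so the argument should reduce to a two-line estimate rather than an inductive or stage-by-stage decomposition.

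Concretely, I would set $v := u^n + \dt N(u^n)$ so that the quantity of interest becomes $\| e^{\tau L} v \|$. Since $\tau \geq 0$ by hypothesis, Theorem \ref{thm:exp_FEcond} applied with $v$ in place of $u^n$ gives
$$\| e^{\tau L} v \| \leq \| v \|.$$
Next, since $0 \leq \dt \leq \DtFE$, the forward Euler hypothesis \eqref{nonlinearFEcond} on $N$ yields
$$\| v \| \;=\; \| u^n + \dt N(u^n) \| \;\leq\; \| u^n \|.$$
Concatenating these two inequalities produces the claimed bound.

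I do not anticipate any real obstacle here: the content of the lemma is essentially that the composition of two norm-contractive maps---first the forward Euler step for the nonlinear part, then the exponential propagator for the linear part---is itself norm-contractive. The only points worth double-checking are that the hypotheses on $\tau$ and $\dt$ in the lemma statement align precisely with the hypotheses needed to invoke \eqref{EXPcondition} (which requires $\tau \geq 0$) and \eqref{nonlinearFEcond} (which requires $\dt \leq \DtFE$), and they do. It is worth noting that the order of operations matters: the same argument would not go through if one attempted to apply $e^{\tau L}$ first and then add $\dt N(\cdot)$, since \eqref{nonlinearFEcond} is an assumption about $N$ evaluated at a state whose norm we control by $\|u^n\|$, not at $e^{\tau L} u^n$.
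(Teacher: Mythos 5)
Your proof is correct and is exactly the intended argument: the paper itself defers the proof of this lemma to \cite{SSPIFRK2018}, where the result follows by precisely this two-step composition of the exponential contractivity \eqref{EXPcondition} applied to $v = u^n + \dt N(u^n)$ followed by the forward Euler condition \eqref{nonlinearFEcond}. Your remark about the order of operations is also apt, since it is this ordering that makes the stage-by-stage estimate in Theorem \ref{thm:SSPIF} work.
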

This Lemma  was also proved in  \cite{SSPIFRK2018}. Once again,  simply replacing $ L$ with $- \tilde{L}$, and the corresponding 
condition \eqref{EXPcondition} with \eqref{EXPcondition2} we obtain a
similar result for the downwinded operator:

\begin{cor} \label{DWstab}
Given a  linear operator $\tilde{L}$ that satisfies  \eqref{EXPcondition2}
and a (possibly nonlinear) operator $N(u)$ that satisfies \eqref{nonlinearFEcond}  
for some value of ${\Delta t}_{FE} \geq 0 $, we have 
\begin{equation}
\| e^{- \tau \tilde{L}} ( u^n  + \dt N (u^n )) \|  \leq \| u^n \|   \; \; \; \; \forall \dt \leq \DtFE,  \; \; \; \mbox{provided that} \; \; \tau \geq 0. 
\end{equation}
\end{cor}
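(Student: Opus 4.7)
The plan is to mirror the proof of Lemma \ref{stab} verbatim, with the sole substitution $L \mapsto -\tilde{L}$, since Corollary \ref{thm:exp_FEcond2} already provides the downwinded analogue of the hypothesis \eqref{EXPcondition} that powered the original Lemma. Concretely, I would first invoke the nonlinear forward Euler condition \eqref{nonlinearFEcond} on $N$: for any $\dt \leq \DtFE$, the vector $v := u^n + \dt N(u^n)$ satisfies $\|v\| \leq \|u^n\|$. Then I would apply Corollary \ref{thm:exp_FEcond2} to this $v$: since $\tilde{L}$ satisfies \eqref{linearFEcond2}, the corollary yields $\|e^{-\tau \tilde{L}} v\| \leq \|v\|$ for every $\tau \geq 0$.

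Chaining these two inequalities gives
\[
\| e^{-\tau \tilde{L}}( u^n + \dt N(u^n))\| \;=\; \| e^{-\tau \tilde{L}} v\| \;\leq\; \|v\| \;\leq\; \|u^n\|,
\]
which is exactly the claimed bound. No regularity, commutativity, or analyticity assumptions on $\tilde{L}$ and $N$ beyond those already inherited through Corollary \ref{thm:exp_FEcond2} and \eqref{nonlinearFEcond} enter the argument.

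I do not anticipate any genuine obstacle: the only thing to check is that the substitution $L \mapsto -\tilde{L}$ cleanly converts \eqref{EXPcondition} into \eqref{EXPcondition2}, and this is precisely the content of Corollary \ref{thm:exp_FEcond2}, which was already established immediately above. In particular, because the downwinded condition \eqref{linearFEcond2} flips the sign of $\dt$ exactly as \eqref{linearFEcond} would after sending $L \mapsto -\tilde{L}$, the proof of Lemma \ref{stab} transfers line-for-line. Thus the corollary requires no new ideas beyond a direct composition of Corollary \ref{thm:exp_FEcond2} with the forward Euler hypothesis on $N$.
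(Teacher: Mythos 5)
Your proposal is correct and matches the paper's approach: the paper likewise obtains the corollary by substituting $L \mapsto -\tilde{L}$ into Lemma \ref{stab}, whose proof is exactly the chaining of the forward Euler bound on $N$ with the exponential contraction property (here supplied by Corollary \ref{thm:exp_FEcond2}). No further comment is needed.
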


The following theorem establishes the conditions under which an integrating factor Runge--Kutta method
which incorporates the downwinded operator $\tilde{L}$  is strong stability preserving:
\begin{thm} \label{thm:SSPIF}
Given   linear operators $L$ and $\tilde{L}$ that satisfy   \eqref{EXPcondition}
and  \eqref{EXPcondition2}, respectively, 
 a (possibly nonlinear) operator $N(u)$ that satisfies \eqref{nonlinearFEcond}  
for some value of ${\Delta t}_{FE} > 0 $, and a Runge--Kutta integrating factor method of the form
\begin{eqnarray} \label{rkIFSO}
u^{(0)} & =  & u^n, \nonumber \\
u^{(i)} & = & \sum_{j=0}^{i-1} e^{L_{ij}^* (c_i-c_j) \dt }  \left( \aij u^{(j)} + \dt \bij N(u^{(j)}) \right), \; \; \; \; i=1, . . ., s\\
 u^{n+1} & = & u^{(s)}  \nonumber
\end{eqnarray}
where $L_{ij}^* = L$ when $c_i \geq c_j$, and $L_{ij}^* = \tilde{L}$ when $c_i < c_j$,
 then $u^{n+1}$ obtained from \eqref{rkIFSO} satisfies
\begin{equation}
\|u^{n+1}\| \leq \|u^n\| \; \; \; \forall \dt \leq \sspcoef \DtFE.
\end{equation}
where 
\[ \sspcoef  =  \min_{i,j} \frac{\aij}{\bij}.\]
\end{thm}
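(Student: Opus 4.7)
The plan is to proceed by induction on the stage index $i$, showing $\|u^{(i)}\| \leq \|u^n\|$ for all $i = 0, 1, \dots, s$. The base case $i=0$ is immediate since $u^{(0)} = u^n$. For the inductive step, I would first rewrite each stage of \eqref{rkIFSO} by factoring out $\aij$ from each term of the sum:
\[
u^{(i)} = \sum_{j=0}^{i-1} \aij \, e^{L_{ij}^*(c_i - c_j)\dt} \left( u^{(j)} + \dt \frac{\bij}{\aij} N(u^{(j)}) \right),
\]
treating any term with $\aij = 0$ as zero (so that the corresponding $\bij$ must also vanish, as noted in the introduction). Because the Runge--Kutta consistency requires $\sum_j \aij = 1$ and all $\aij \geq 0$, this exhibits $u^{(i)}$ as a convex combination of the quantities in parentheses, premultiplied by the exponential factors.

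Next, I would apply the convexity of the norm (triangle inequality) to bound
\[
\|u^{(i)}\| \leq \sum_{j=0}^{i-1} \aij \left\| e^{L_{ij}^*(c_i - c_j)\dt} \left( u^{(j)} + \dt \frac{\bij}{\aij} N(u^{(j)}) \right) \right\|,
\]
and then treat each summand using the stability results from earlier in the paper. Here the definition of $L_{ij}^*$ is designed precisely so that the exponent sits in the admissible regime of Lemma \ref{stab} or Corollary \ref{DWstab}: if $c_i \geq c_j$, then $L_{ij}^* = L$ and $\tau = (c_i - c_j)\dt \geq 0$, so Lemma \ref{stab} applies directly; if $c_i < c_j$, then $L_{ij}^* = \tilde{L}$ and the exponential is $e^{-\tilde{L}\tau}$ with $\tau = (c_j - c_i)\dt \geq 0$, which is exactly the form handled by Corollary \ref{DWstab}.

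To invoke those results, the scaled forward Euler step $u^{(j)} + \dt \tfrac{\bij}{\aij} N(u^{(j)})$ must satisfy \eqref{nonlinearFEcond} with effective time-step $\dt \tfrac{\bij}{\aij}$; this holds whenever $\dt \tfrac{\bij}{\aij} \leq \DtFE$, equivalently $\dt \leq \tfrac{\aij}{\bij} \DtFE$ for each $(i,j)$ pair with $\aij, \bij > 0$. Taking the minimum over all such pairs gives exactly the SSP time-step bound $\dt \leq \sspcoef \DtFE$. Under this condition, each term on the right is bounded by $\|u^{(j)}\|$, which by the inductive hypothesis is at most $\|u^n\|$, so
\[
\|u^{(i)}\| \leq \sum_{j=0}^{i-1} \aij \|u^n\| = \|u^n\|.
\]
Setting $i = s$ yields the stated bound on $u^{n+1}$.

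I do not anticipate a genuine obstacle: all the heavy lifting is done by Corollary \ref{DWstab}, which extends Lemma \ref{stab} to the downwind operator with negative exponential argument. The one place that requires care is bookkeeping the sign of $c_i - c_j$ to confirm that the choice of $L_{ij}^*$ always puts the exponential argument into the nonnegative $\tau$ regime required by the lemma/corollary; this is essentially by construction but should be stated explicitly. The degenerate $\aij = 0$ case (with forced $\bij = 0$) should also be flagged so that the convex-combination rewriting is unambiguous.
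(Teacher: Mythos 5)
Your proposal is correct and follows essentially the same route as the paper's own proof: apply the triangle inequality to the stage sum, factor out $\aij$ to expose forward-Euler-type terms, and invoke Lemma \ref{stab} and Corollary \ref{DWstab} (with the sign of $c_i-c_j$ dictating which applies) under the time-step restriction $\dt \leq \sspcoef\DtFE$, concluding by induction and $\sum_j \aij = 1$. In fact your write-up is more complete than the paper's, which stops after the displayed inequalities and leaves the final induction and time-step bookkeeping implicit.
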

\begin{proof}
We observe that  for each stage of \eqref{rkIFSO}
\begin{eqnarray*}
\| u^{(i)} \|& = & \left\| \sum_{j=0}^{i-1} e^{L_{ij}^* (c_i-c_j) \dt }  \left( \aij u^{(j)} + \dt \bij N(u^{(j)}) \right) \right\| \\
& \leq  & \sum_{j=0}^{i-1} \left\|  e^{L_{ij}^* (c_i-c_j) \dt }  \left( \aij u^{(j)} + \dt \bij N(u^{(j)}) \right)\right\| \\
& \leq  &  \sum_{j=0}^{i-1} \aij \left\|  e^{L_{ij}^* (c_i-c_j) \dt }  \left( u^{(j)} + \dt \frac{\bij}{\aij} N(u^{(j)}) \right) \right\| 
\end{eqnarray*}
where the last inequality follows from Lemma \ref{stab} and Corrolary \ref{DWstab}.
\end{proof}

\bigskip
The following example demonstrates the need for using the downwind operator when the abscissas are decreasing.

\noindent{\bf Motivating Example:} To demonstrate the practical importance of this theorem, consider the partial differential equation
 \begin{align} 
U_t + a U_x +  \left( \frac{1}{2} U^2 \right)_x & = 0 \hspace{.75in}
    u(0,x)  =
\begin{cases}
1, & \text{if } 0 \leq x \leq 1/2 \\
0, & \text{if } x>1/2 \nonumber
\end{cases}
\end{align}
on the domain $[0,1]$ with periodic boundary conditions. We discretize the spatial grid with $400$ points and use
a first-order upwind difference  $L u \approx -a u_x  $ for $a>0$ defined by 
\begin{eqnarray} \label{upwinding}
 (L u)_j =  -a \left(  \frac{u_{j} - u_{j-1} }{\Delta x} \right) 
 \end{eqnarray} 
to semi-discretize the linear term. This operator satisfies the TVD condition 
\[ \| u^n +  \dt    {L} u \|_{TV} \leq \| u^n \|_{TV}  \; \; \; \; \mbox{for} \; \; \; \dt \leq \frac{1}{a} \dx.\] 
In this example, we use $a=10$.

\begin{figure}
  \begin{minipage}[c]{0.57\textwidth}
\includegraphics[scale=.55]{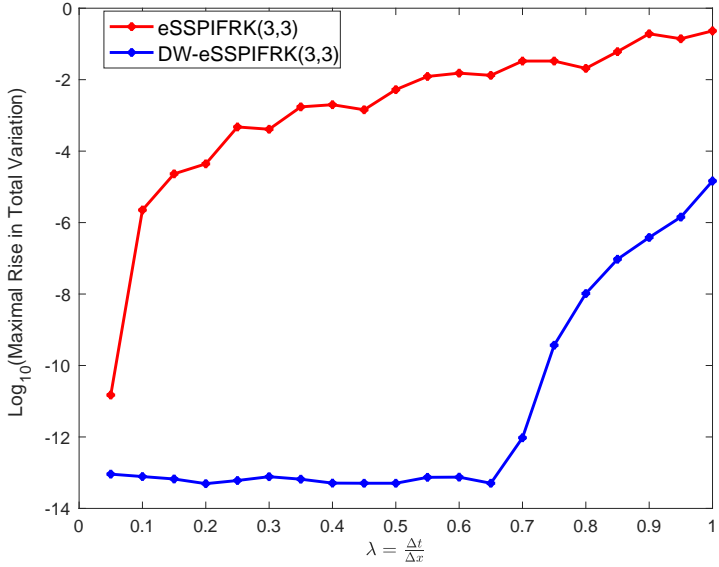} 
  \end{minipage}\hfill
  \begin{minipage}[c]{0.4\textwidth} 
    \caption{ Total variation behavior of the evolution over 25 time-steps evolving the 
     integrating factor methods with the  eSSPRK(3,3) Shu-Osher Runge--Kutta method, 
    \eqref{SOIF} (red) and with the corresponding method with downwinding  \eqref{SOIFDW} (blue).
On the x-axis is the value of $\lambda = \frac{\dt}{\dx}$,
on the y-axis is $log_{10}$ of the maximal rise in TV.}
 \label{fig:motivating}
  \end{minipage}
\end{figure}

For the nonlinear terms, we use  a fifth order WENO finite difference method  \cite{WENOjiang}
\[ N(u) =   WENO\left( - \frac{1}{2}   u^2 \right) \approx  - \left( \frac{1}{2} u^2 \right)_x.\]
Although the WENO method 
is not guaranteed to preserve the total variation behavior, in practice we observe that WENO seems to satisfy 
\[ \| u^n +  \dt    N( u) \|_{TV} \leq \| u^n \|_{TV} \; \; \; \; \mbox{for} \; \; \; \dt \leq \frac{1}{2} \dx \] for this
problem.

  For the time discretization, we use the integrating factor method based on the 
explicit eSSPRK(3,3) Shu-Osher method \eqref{SSPRK33}:
\begin{eqnarray} \label{SOIF}
     u^{(1)} &= & e^{L\dt}u^n + e^{L\dt} \dt N(u^n) \nonumber \\
     u^{(2)} &= & \frac{3}{4} e^{\frac{1}{2}L\dt} u^n + \frac{1}{4} e^{-\frac{1}{2}L\dt} \left( u^{(1)} + \dt N(u^{(1)}) \right) \nonumber \\
     u^{n+1} & = & \frac{1}{3}e^{L\dt}  u^n +\frac{2}{3} e^{\frac{1}{2}L\dt}  \left( u^{(2)} + \dt N(u^{(2)}) \right). 
\end{eqnarray}
The appearance of  negative exponents is due to the fact that the optimal explicit eSSPRK(3,3) Shu-Osher method  \eqref{SSPRK33} has decreasing abscissas.
These terms threaten to destroy the TVD property.

To correct for these negative values, we use the integrating factor method based on the same
explicit eSSPRK(3,3) Shu-Osher method \eqref{SSPRK33}, 
\begin{eqnarray} \label{SOIFDW}
     u^{(1)} &= & e^{L\dt}u^n + e^{L\dt} \dt N(u^n) \nonumber \\
     u^{(2)} &= & \frac{3}{4} e^{\frac{1}{2}L\dt} u^n + \frac{1}{4} e^{-\frac{1}{2}\tilde{L} \dt}  \left( u^{(1)} + 
     \dt N(u^{(1)}) \right)   \nonumber \\
     u^{n+1} & = & \frac{1}{3}e^{L\dt}  u^n +\frac{2}{3} e^{\frac{1}{2}L\dt} \left( u^{(2)} +   \dt N(u^{(2)}) \right). 
\end{eqnarray}
but here, whenever the abscissas are decreasing we use a downwinded operator $\tilde{L} \approx 10 u_x$ defined by 
\begin{eqnarray} \label{downwinding}
  \tilde{L} u =  - a \left(  \frac{u_{j+1} - u_{j} }{\Delta x} \right) .
  \end{eqnarray}
  Note that in this case, $\tilde{L} = - L^T$.
    This operator satisfies the TVD condition 
\[ \| u^n -  \dt    \tilde{L} u \|_{TV} \leq \| u^n \|_{TV}  \; \; \; \; \mbox{for} \; \; \; \dt \leq \frac{1}{a} \dx.\]
  (Again, $a=10$ in our case).

We  selected different values of $\dt$ and used each one to evolve the solution  25 time steps using 
the integrating factor Runge--Kutta  methods \eqref{SOIF} without downwinding and \eqref{SOIFDW} with downwinding.
At each stage we   calculated the maximal rise in total variation  for  25 time steps. 
In Figure \ref{fig:motivating} we show the $log_{10}$ of the maximal rise in total variation vs. the value of $\lambda = \frac{\dt}{\dx}$ of the evolution 
using the standard integrating factor Runge--Kutta method \eqref{SOIF} (in red) and  the method with downwinding \eqref{SOIFDW} (in blue). 
We observe that when downwinding is not used
there is a large maximal rise in total variation even for very small values of $\lambda$. 
However, if we correct for the decreasing abscissas by using
the downwinded operator $\tilde{L}$, as in  \eqref{SOIFDW}, the numerical solution  
maintains a small maximal rise in total variation up to $\lambda \approx  0.65$.

\section{Explicit SSP Runge--Kutta methods} \label{sec:background}

 In this section, we present some popular and efficient explicit SSP Runge--Kutta methods.
SSP Runge--Kutta methods of various stages and order were reported in \cite{SSPbook2011}. The SSP coefficients
 of optimal explicit SSP Runge--Kutta methods of up to $s=10$ stages and
order $p=4$ are in  Table \ref{tab:SSPcoef}. Many of these methods do not feature only non-decreasing abscissas (the second order
methods are an exception). In  Table \ref{tab:SSPcoef+}  we present the corresponding  SSP coefficients of the explicit Runge--Kutta
methods with non-decreasing abscissas.
Unfortunately, no methods of order $p \geq 5$ with positive SSP coefficients can exist \cite{kraaijevanger1991,ruuth2001}.

 \begin{table}
\centering
\setlength\tabcolsep{4pt}
\begin{minipage}{0.48\textwidth}
\centering
\begin{tabular}{|c|lll|} \hline
  \diagbox{s}{p}  &  2 & 3 & 4 \\
 \hline
    1  &  -           &  -           &   -\\   
    2  &  1.0000 &  -           &   -\\    
    3  &  2.0000 &  {1.0000} &   -\\
    4  &  3.0000 &  {2.0000}  &   -\\
    5  &  4.0000 &  2.6506 &   {1.5082}\\
    6  &  5.0000 &  3.5184 &   2.2945\\
    7  &  6.0000 &  4.2879 &   3.3209\\
    8  &  7.0000 &  5.1071 &   4.1459\\
   9  &  8.0000 &  6.0000 &   4.9142\\
   10 &  9.0000 &  6.7853 &   {6.0000}\\
\hline
\end{tabular}
\caption{SSP coefficients of the optimal eSSPRK(s,p) methods.}
\label{tab:SSPcoef} 
\end{minipage}%
\hfill
\begin{minipage}{0.48\textwidth}
\centering
\begin{tabular}{|c|lll|} \hline
  \diagbox{s}{p}  &  2 & 3 & 4 \\
 \hline
    1  &  -           &  -           &   -\\   
    2  &  1.0000 &  -           &   -\\    
    3  &  2.0000 &  {0.7500} &   -\\
    4  &  3.0000 &  {1.8182} &   -\\
5  &  4.0000 &  2.6351 &   {1.3466}\\
    6  &  5.0000 &  3.5184 &   2.2738\\
    7  &  6.0000 &  4.2857 &   3.0404\\
    8  &  7.0000 &  5.1071 &   3.8926\\
   9  &  8.0000 &  6.0000 &   4.6048\\
   10 &  9.0000 &  6.7853 &   {5.2997} \\
\hline
\end{tabular}
\caption{SSP coefficients of the optimal eSSPRK+(s,p) methods with non-decreasing abscissas.}
 \label{tab:SSPcoef+} 
\end{minipage}
\end{table}

%
We use the notation eSSPRK(s,p) to denote an explicit SSP Runge--Kutta method with
$s$ stages and of order $p$. 
As in \cite{SSPIFRK2018} we use the notation  eSSPRK+(s,p) to denote the corresponding method with 
non-decreasing abscissas. 
In this paper we consider the Shu-Osher method eSSPRK(3,3) 
as well as the eSSPRK(4,3), eSSPRK(5,4), and eSSPRK(10,4). We selected these methods  by 
examining the SSP coefficients in Tables \ref{tab:SSPcoef} and \ref{tab:SSPcoef+} above and selecting 
the two methods for third order and fourth order for which the SSP coefficient of the eSSPRK+(s,p) method is 
significantly smaller than the corresponding eSSPRK(s,p) method. In fact, these are good methods to explore as
the eSSPRK(3,3) and eSSPRK(10,4) are widely used methods. These methods are given below:

\noindent{\bf eSSPRK(3,3):}
\begin{eqnarray} \label{SSPRK33}
     u^{(1)} &= & u^n +  \dt F(u^n) \nonumber \\
     u^{(2)} &= & \frac{3}{4} u^n + \frac{1}{4}  u^{(1)} + \frac{1}{4}  \dt F(u^{(1)})  \nonumber \\
     u^{n+1} & = & \frac{1}{3}   u^n +\frac{2}{3}   u^{(2)} +  \frac{2}{3}  \dt F(u^{(2)}). 
\end{eqnarray}
This method has $\sspcoef=1$. The abscissas are  \[  (c_1, c_2, c_3)= (0, 1, 1/2).\]

\noindent{\bf eSSPRK(4,3):}
\begin{eqnarray} \label{SSPRK43}
u^{(1)} &= & u^n + \frac{1}{2}\dt F(u^n)  \nonumber \\
u^{(2)} &= & u^{(1)}+ \frac{1}{2}\dt F(u^{(1)})   \nonumber \\
u^{(3)} &= & \frac{2}{3}u^n + \frac{1}{3}\left(u^{(2)}+ \frac{1}{2}\dt F(u^{(2)})  \right)  \nonumber \\
u^{n+1} & = & u^{(3)}+ \frac{1}{2}\dt F(u^{(3)}) 
\end{eqnarray}
This method has $\sspcoef=2$. The abscissas are  \[  (c_1, c_2, c_3, c_4)= (0, 1/2, 1, 1/2).\]

No four stage fourth order explicit Runge--Kutta methods exist with a positive SSP coefficient
 \cite{gottliebshu1998,ruuth2001}. However, fourth order methods with more than four stages
($s>p$) do exist. A five stage fourth order method found by Spiteri and Ruuth \cite{SpiteriRuuth2002}  is\\
 \noindent{\bf eSSPRK(5,4):}
   \begin{eqnarray*}
u^{(1)} & = & u^n +  0.391752226571890 \dt F(u^n) \\ 
u^{(2)} & = &  0.444370493651235 u^n +  0.555629506348765 u^{(1)} 
+ 0.368410593050371 \dt F(u^{(1)}) \\ 
u^{(3)} & = &  0.620101851488403 u^n +  0.379898148511597 u^{(2)} 
 + 0.251891774271694  \dt F(u^{(2)}) \\ 
u^{(4)} & = &  0.178079954393132 u^n + 0.821920045606868 u^{(3)} 
+  0.544974750228521 \dt F(u^{(3)})\\ 
u^{n+1} & = &    0.517231671970585 u^{(2)} 
 +  0.096059710526147 u^{(3)} +  0.063692468666290 \dt F(u^{(3)}) \\ 
& & +  0.386708617503268 u^{(4)} +   0.226007483236906 \dt F(u^{(4)}) \, ,
\end{eqnarray*}
The abscissas are \[  (c_1, c_2, c_3, c_4, c_5) = 
(0,   0.391752226571889,   0.586079689311541,   0.474542363121399,  0.935010630967652  ).\]

A notable example of a fourth order methods with more than four stages  is Ketcheson's eSSPRK(10,4)  that has
$\sspcoef=6 $  and an attractive low storage formulation \cite{ketcheson2008}: \\
  \noindent{\bf eSSPRK(10,4):} 
\begin{eqnarray*}
u^{(1)} & = & u^n + \frac{1}{6} \dt F(u^n) \\ 
u^{(i+1)} & = & u^{(i)} + \frac{1}{6} \dt F(u^{(i)}) \; \; \;  i=1,2, 3\\ 
u^{(5)} & = & \frac{3}{5} u^n +   \frac{2}{5} \left(u^{(4)} +  \frac{1}{6} \dt F(u^{(4)})\right) \\ 
u^{(i+1)} & = & u^{(i)} + \frac{1}{6} \dt F(u^{(i)}) \; \; \; i=5,6,7,8 \\ 
u^{n+1} & = &   \frac{1}{25} u^{n} +  \frac{9}{25} \left(u^{(4)} + \frac{1}{6}  \dt F(u^{(4)}) \right)+  \frac{3}{5} \left(u^{(9)}
+ \frac{1}{6}  \dt F(u^{(9)})\right)  \, ,
\end{eqnarray*}
has  $\sspcoef=6$.
The abscissas are 
 \[  (c_1, c_2, c_3, c_4, c_5, c_6, c_7, c_8, c_9, c_{10}) =
( 0,    1/6,     1/3,      1/2,  2/3,    1/3,    1/2,   2/3,     5/6,   1 ).\]     

These four eSSPRK(s,p) methods have an SSP coefficient that is significantly larger than the corresponding 
methods with only non-decreasing abscissas, eSSPRK+(s,p), as we can see in Tables \ref{tab:SSPcoef} and  
\ref{tab:SSPcoef+}. This leads us to expect that for these $(s,p)$ combinations, using the downwinding operator $\tilde{L}$
to salvage the SSP property of the eSSPRK methods would be more efficient than using the corresponding 
eSSPRK+ method with only non-decreasing coefficients. In the following section, we use these methods in numerical tests and compare their
performance.

\section{Numerical Results\label{sec:test}}

\subsection{Sharpness of SSP  time-step}
As in the motivating example, we consider Burgers' equation with a linear advection term
\begin{align} 
U_t + 10  U_x +  \left( \frac{1}{2} U^2 \right)_x & = 0 \hspace{.75in}
    U(0,x)  =
\begin{cases}
1, & \text{if } 1/4 \leq x \leq 3/4 \\
0, & \text{else }    \nonumber
\end{cases}
\end{align}
on the domain $x \in [0,1]$ with periodic boundary conditions. 
 We discretize the spatial grid with $1000$ points and use
a first-order upwind difference  $L u \approx -10 u_x  $ defined by 
\eqref{upwinding}
to semi-discretize the linear term. As mentioned above, this operator satisfies the TVD condition 
\[ \| u^n +  \dt    {L} u \|_{TV} \leq \| u^n \|_{TV}  \; \; \; \; \mbox{for} \; \; \; \dt \leq \frac{1}{10} \dx.\] 
When the abscissas decrease, the downwind operator is used instead, as in \eqref{rkIFSO}.
This downwind operator is defined by \eqref{downwinding} and satisfies the TVD condition 
\[ \| u^n -  \dt    \tilde{L} u \|_{TV} \leq \| u^n \|_{TV}  \; \; \; \; \mbox{for} \; \; \; \dt \leq \frac{1}{10} \dx.\] 
\begin{figure}
\includegraphics[scale=.45]{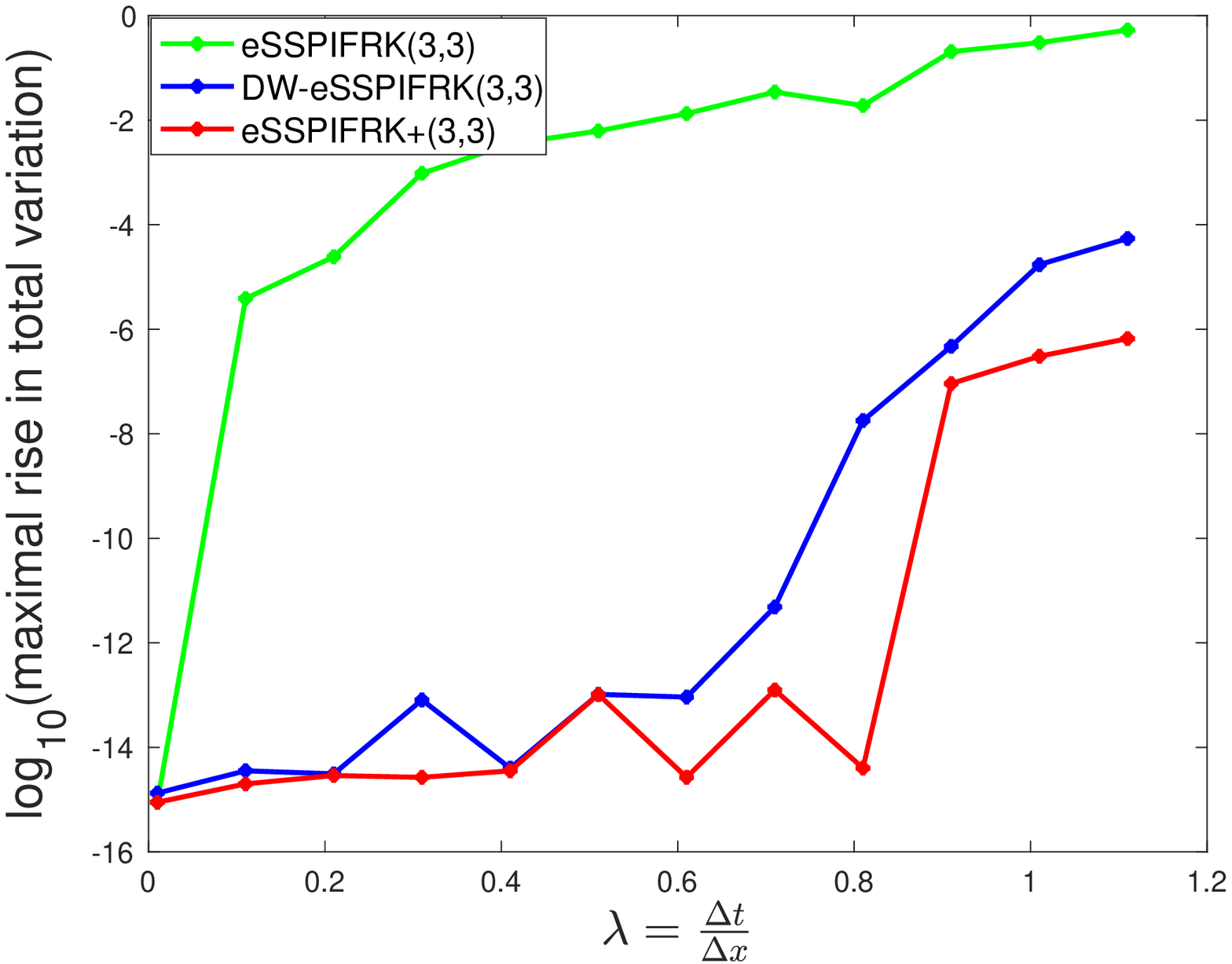} 
\includegraphics[scale=.45]{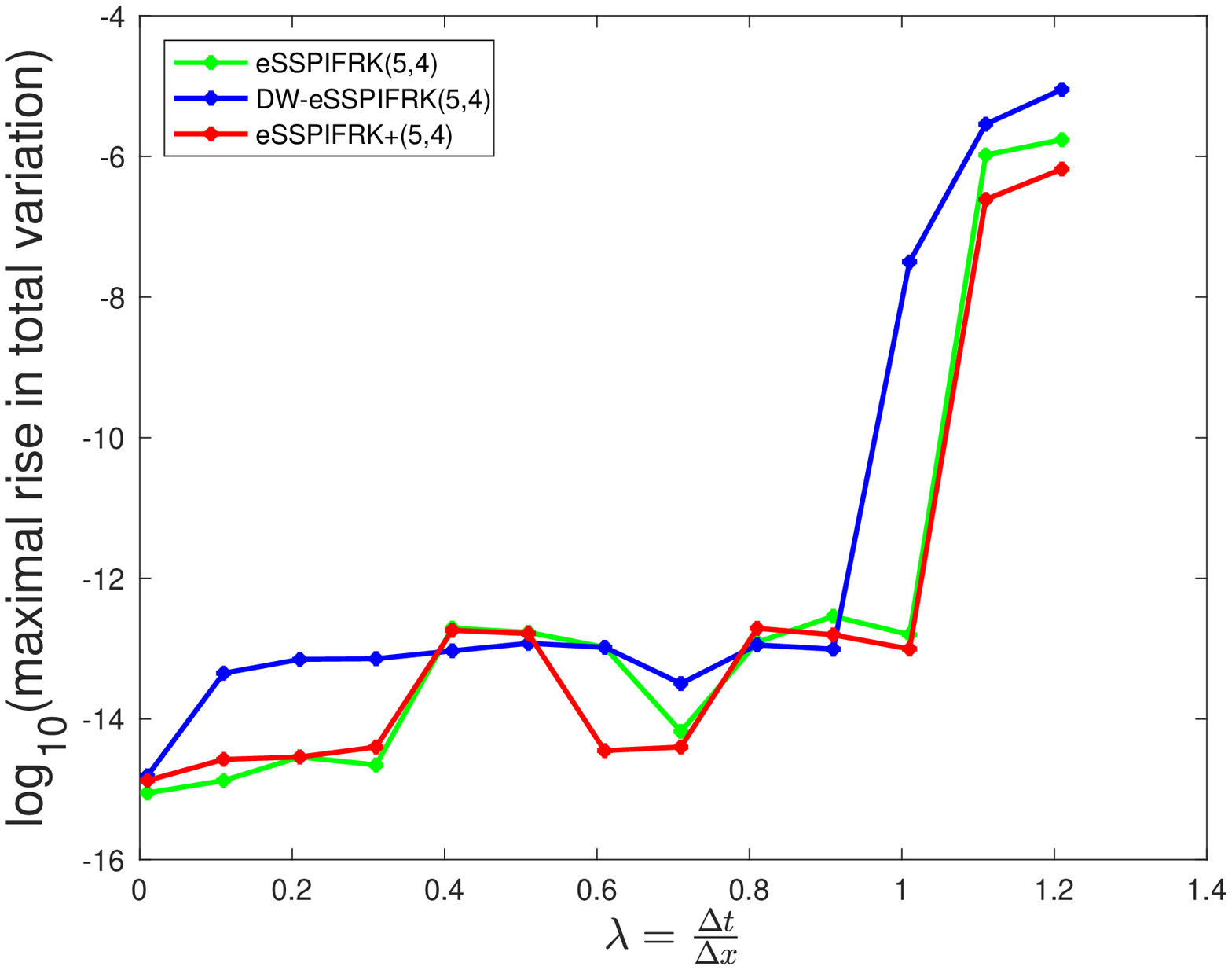} 
\\
\includegraphics[scale=.45]{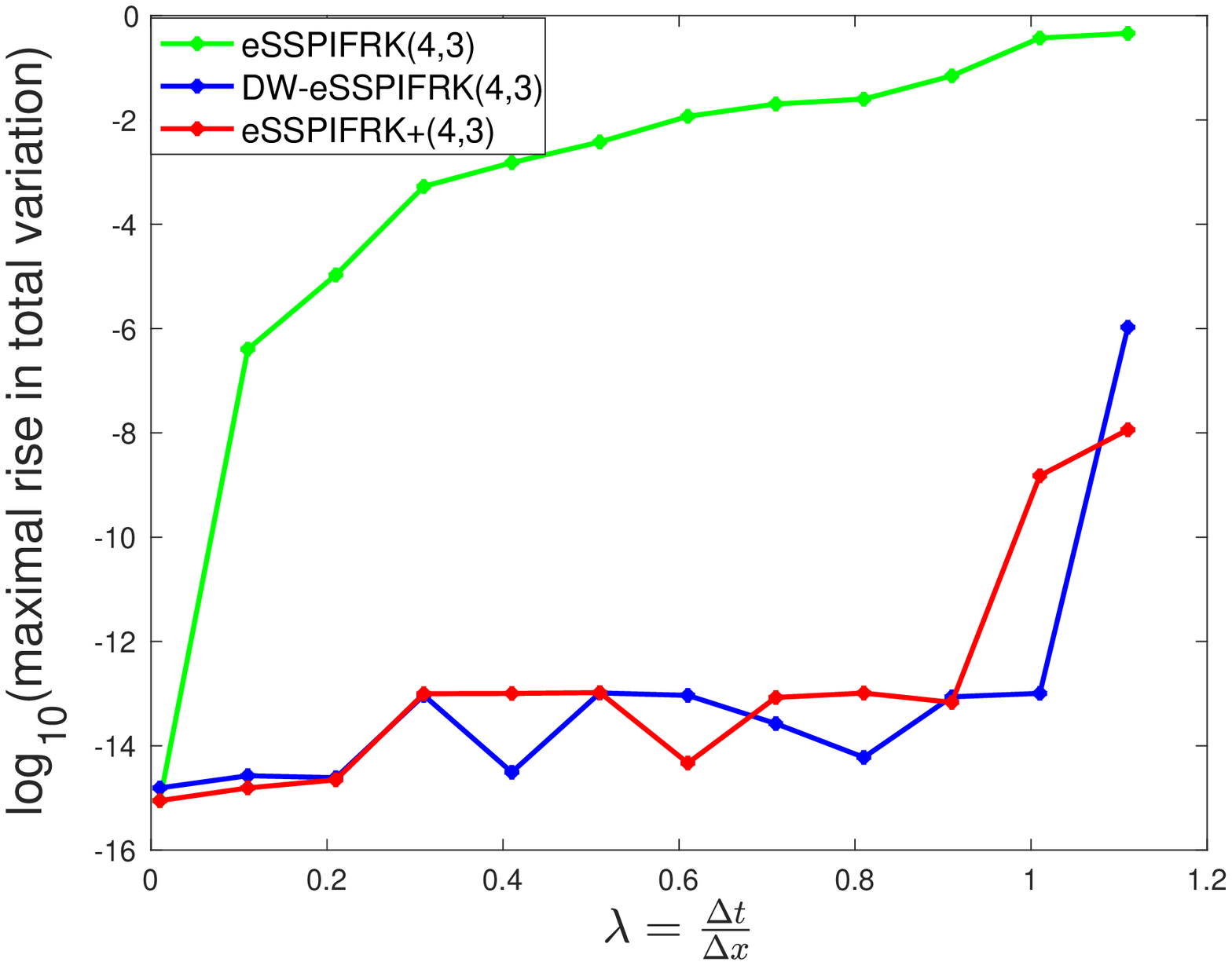} 
\includegraphics[scale=.45]{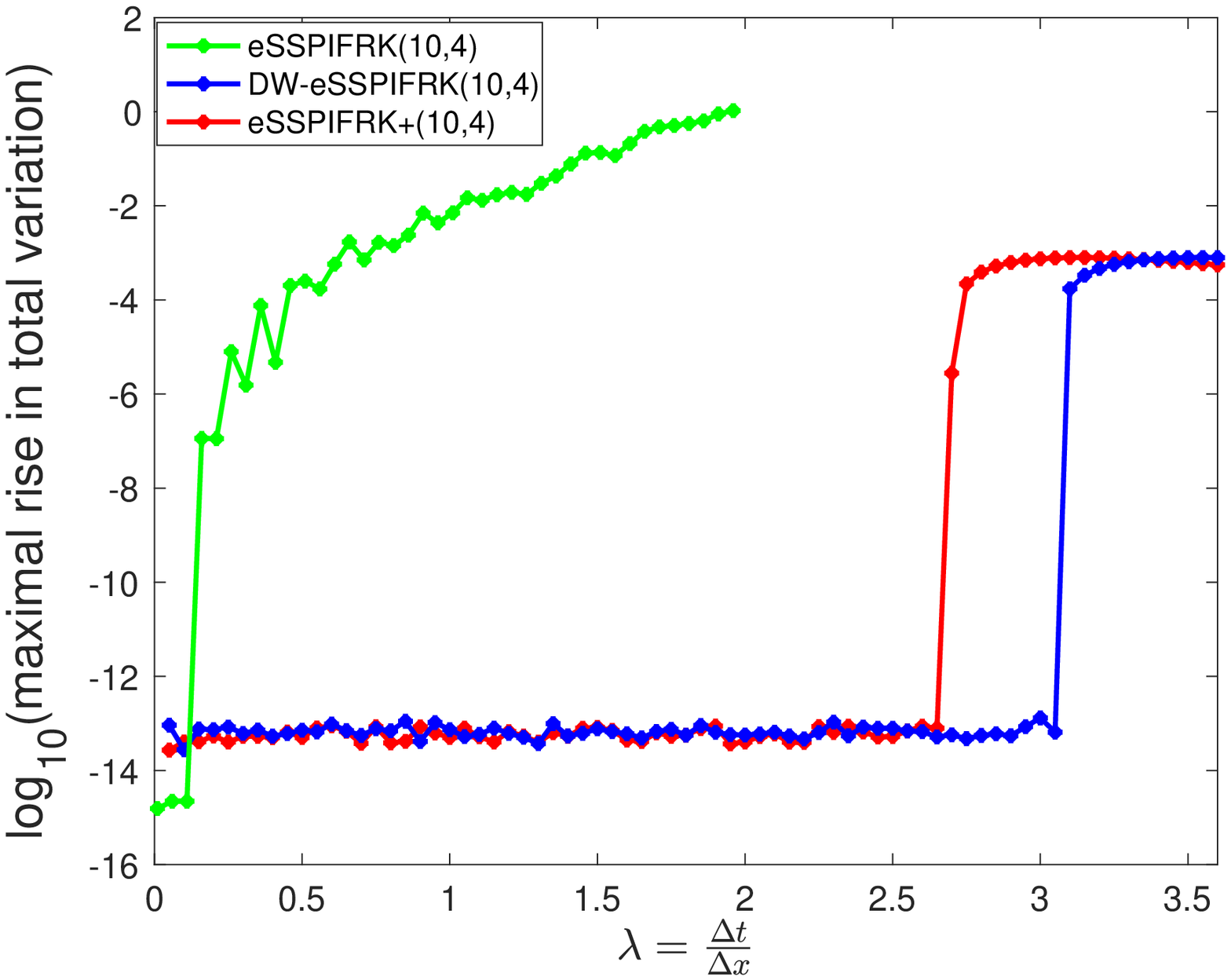}
    \caption{ Total variation behavior of the evolution over 25 time-steps evolving the 
     integrating factor methods with the  eSSPRK(s,p) Shu-Osher Runge--Kutta method, 
(green) and with the corresponding method with downwinding   (blue), as well as a comparison
with the eSSPRK+(s,p) method with non-decreasing abscissas (red). The methods selected
are $(s,p)=(3,3),(4,3),(5,4), (10,4)$.
On the x-axis is the value of $\lambda = \frac{\dt}{\dx}$,
on the y-axis is $log_{10}$ of the maximal rise in TV.}
 \label{fig:TVDtest}
\end{figure}

For the nonlinear terms, we use  a fifth order WENO finite difference method  \cite{WENOjiang}
\[ N(u) = \frac{1}{2} WENO\left( - u^2 \right) \approx  - \left( \frac{1}{2} u^2 \right)_x\]. Although the WENO method 
is not guaranteed to preserve the total variation behavior, in practice we observe that WENO seems to satisfy 
\[ \| u^n +  \dt    N( u) \|_{TV} \leq \| u^n \|_{TV} \; \; \; \; \mbox{for} \; \; \; \dt \leq \frac{1}{2} \dx \] for this
problem.

We measure the total variation of the numerical solution at each stage, and compare it to the 
total variation at the previous stage. We are interested in the size of  time-step $\dt$ at which the total variation
begins to rise. We refer to this value as the {\em observed TVD time-step}. We are interested in 
comparing this value with the expected TVD time-step dictated by the theory. We call the SSP coefficient
corresponding to the value of the observed TVD time-step the {\em observed SSP coefficient} $\sspcoef_{obs}$.
 In Figure  \ref{fig:TVDtest} we show the  $log_{10}$ of the maximal rise in total variation
versus the ratio $\lambda = \frac{\dt}{\Delta x}$, for methods with $(s,p)=(3,3), (4,3), (5,4)$, and $(10,4)$.
In each graph, we compare the integrating factor Runge--Kutta  using the eSSPRK(s,p) method with and without downwinding,
to the  integrating factor Runge--Kutta  using the eSSPRK+(s,p) method. 

The green lines in  Figure \ref{fig:TVDtest} show that if we do not correct for the decreasing abscissas, 
the total variation is usually not well-controlled. This is true for the 
methods with $(s,p)=(3,3), (4,3)$, and $(10,4)$.
However,  the eSSPRK(5,4) method works well without downwinding, and in fact its performance is identical to that of
the  eSSPRK+(5,4) method. On the other hand, downwinding negatively impacts the size of the time-step 
for which the total variation begins to rise. Although the integrating factor approach with  eSSPRK(5,4)  and downwinding
behaves even better than predicted by the theory, the  eSSPRK+(5,4) method  
out-performs the theoretical bound by more  \cite{SSPIFRK2018} .
This highlights the fact that while downwinding guarantees that 
the strong stability property will be preserved when the abscissas decrease, the lack of this guarantee does not
always mean that the strong stability property will be violated.

Comparing the blue and red lines in Figure  \ref{fig:TVDtest}  we note that for the $(s,p)=(3,3)$ method, 
the eSSPRK+(3,3) method with non-decreasing abscissas (red) out-performs the eSSPRK+(3,3) method with downwinding (blue). 
This may be explained by the fact that this methods performed better than expected: the solution was TVD for larger time-step than predicted
by the SSP coefficient (see \cite{SSPIFRK2018} for a discussion of this behavior). 

For the methods with $(s,p)=(4,3)$ and $(s,p)=(10,4)$ 
we observe in practice the behavior predicted by the theory: 
when the eSSPRK(s,p) method is not corrected with downwinding (green line), 
we observe a rise in total variation for any value of $\lambda$. 
When the integrating factor   eSSPRK(s,p) method is  
corrected with downwinding when the  abscissas decrease (blue) the allowable time-step for SSP is larger
than for the  eSSPIFRK+(s,p) method (red)  as predicted by the  theory. 
As we noted in \cite{SSPIFRK2018},
the SSP coefficients of the eSSPRK+(s,p) methods are approximately 10\% smaller than  those of the eSSPRK(s,p) methods,
so the advantage of using downwind over using a method with non-decreasing abscissas is relatively modest.
In the case of $(s,p)=(4,3)$ the increase is from $\lambda_{TV}=0.9$ for the method with non-decreasing abscissas to $\lambda_{TV}=1$  
for the methods with downwinding, and for $(s,p)=(10,4)$ the increase is from $\lambda_{TV}=2.65$ to $\lambda_{TV}=3$.

\subsection{Accuracy studies}

Consider the problem
\begin{align} \label{PDEtest}
U_t +  U_x +   \left( \frac{1}{2} U^2 \right)_x & = 0 \hspace{.75in}
    U(0,x)  = \frac{1}{2} \left( 1 + \sin( x) \right)
\end{align}
on the domain $ 0 \leq x \leq 2 \pi$.
We  use the fifth order WENO for the nonlinear term, and upwind finite difference methods 
to spatially discretize the linear advection term. 
For the time-discretization we step to final time $T_f = 1.0$ using 
an integrating factor Runge--Kutta approach with eSSPRK(3,3) and  eSSPRK(10,4) with and without downwinding.
 
To compute the highly accurate reference solution to the PDE we used $24,000$ points in space and 
a spectral differentiation operator for the linear advection  term with a fifth order WENO for the nonlinear Burgers' term;
for the time evolution we used   MATLAB's embedded ODE45 routine with $AbsTol=10^{-14}$ and $RelTol=5 \times 10^{-14}$.
In this accuracy test, we have a smooth solution (for the  time interval selected), and so the  spectral differentiation operator, which does
not have the nonlinear stability properties needed for the solution of a problem with discontinuities, will be stable for this 
smooth problem.
 
 \begin{figure}[ht]
\includegraphics[scale=.425]{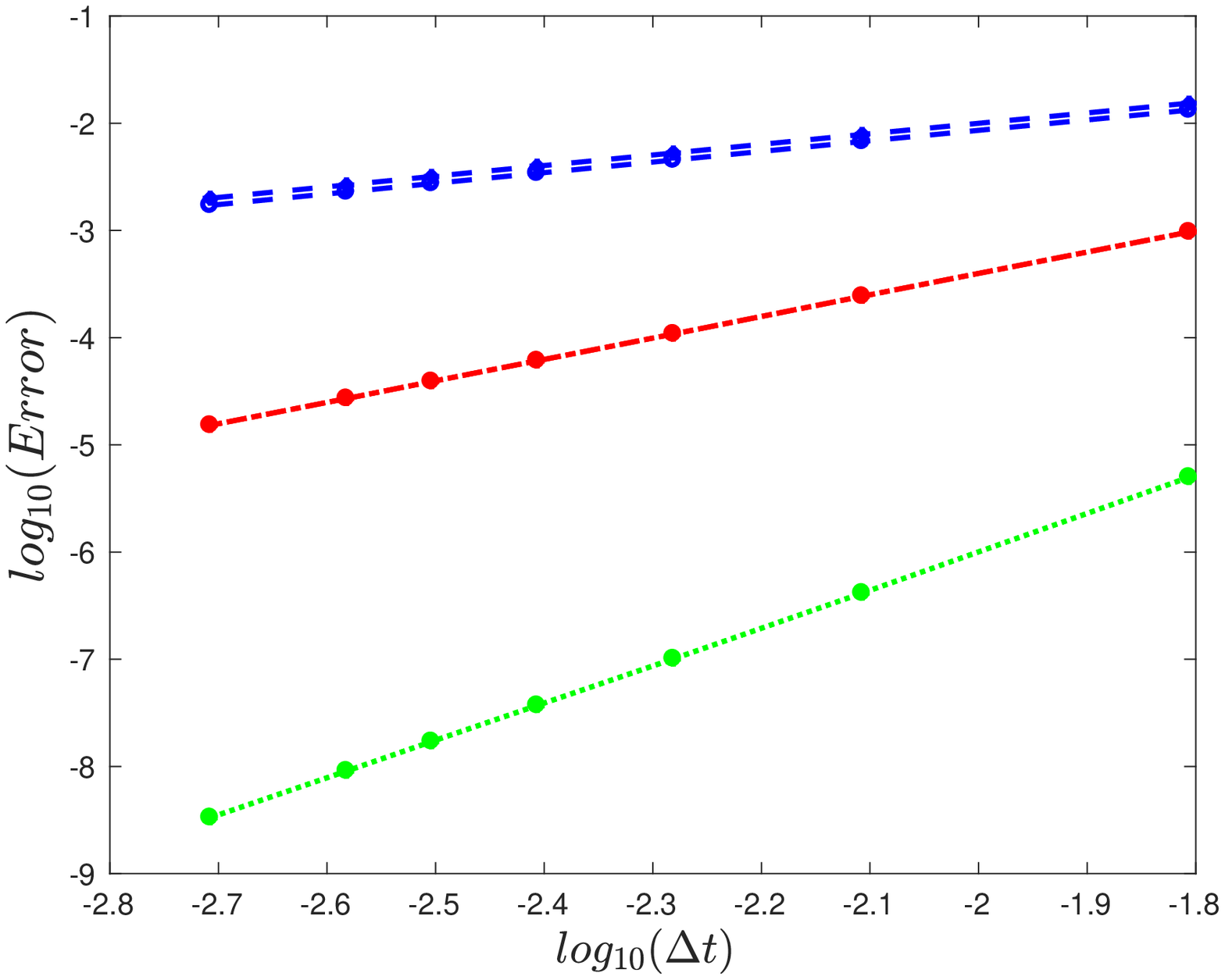} 
\includegraphics[scale=.425]{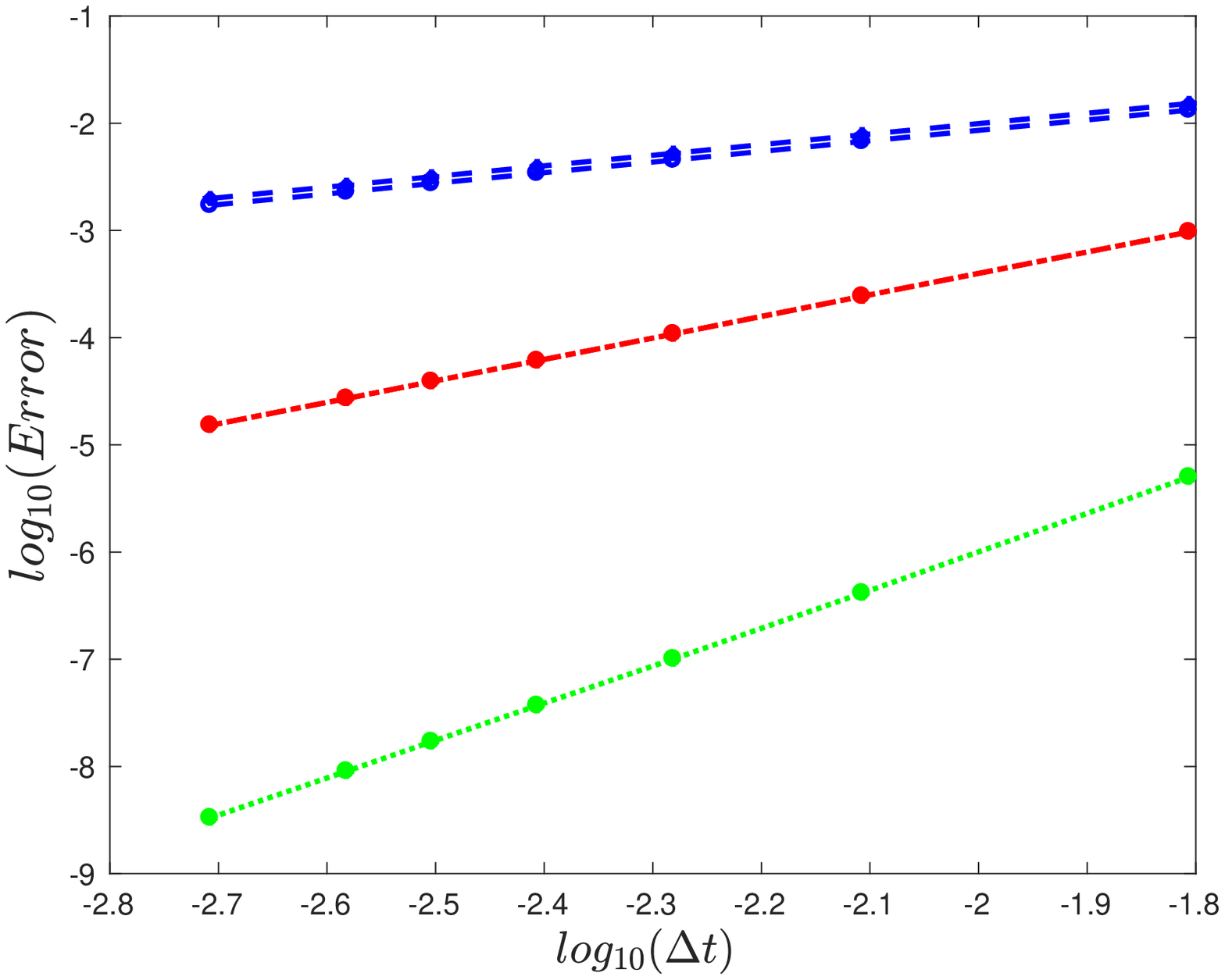} 
    \caption{\small Test 1: Log-log plot of the $L_2$ errors vs. the timestep using the 
     integrating factor Runge--Kutta method eSSPRK(s,p) with downwinding (with * markers) and the 
 integrating factor Runge--Kutta method  with non-decreasing abscissas eSSPRK+(s,p) (with o markers).
 The blue line is for the first order spatial operator $L1_{N_x}$, the red line is for the second order spatial operator $L2_{N_x}$, 
 and the green line for the spectral spatial operator $Lspec_{N_x}$.
 Left: $(s,p)=(3,3)$. Right: $(s,p)=(10,4)$.} Note that the markers look like solid circles because the * markers overlap with the o markers.
 \label{fig:accuracy}
\end{figure}

\vspace*{-.25in}

\noindent{\bf Test 1: space-time co-refinement study.}\\
 In our first test, we use  $N_x= [100,200,300,400,500,600,800]$ points in space and 
a time-step of $\dt = \frac{\dx}{4}$. For the spatial discretization we use a first order upwind operator $L1_{N_x}$, the 
second order upwind operator $L2_{N_x}$, and the spectral operator $Lspec_{N_x}$.
For each value of $N_x$ we compute the error vector and calculate its $L_2$ norm. In Figure \ref{fig:accuracy} we compare the 
convergence of the integrating factor Runge--Kutta method eSSPRK(3,3) with downwinding to the 
 integrating factor Runge--Kutta method  with non-decreasing abscissas eSSPRK+(3,3).  
 We observe that when using a low-order spatial discretization $L1_{N_x}$ and $L2_{N_x}$ for the linear advection term
 the spatial error is clearly dominating, and the order of convergence is first and second order respectively.
 The results are the same when we use eSSPRK(10,4) with downwinding and eSSPRK+(10,4).
 When using the spectral discretization $Lspec_{N_x}$ for the linear advection term, we see convergence of order $3.5$.
 This order is the same for all the methods, whether using the third or fourth order time discretization, so we conclude
 that here, too, the spatial error dominated.

This first test shows that the integrating factor approach using the optimal SSP Runge--Kutta method while
incorporating downwinding converges properly, and its errors are close to
identical to the non-decreasing abscissa approach described in \cite{SSPIFRK2018}.
This establishes that, as expected, downwinding is an appropriate technique to employ when dealing with a PDE.

 \begin{figure}[htb]
\includegraphics[scale=.425]{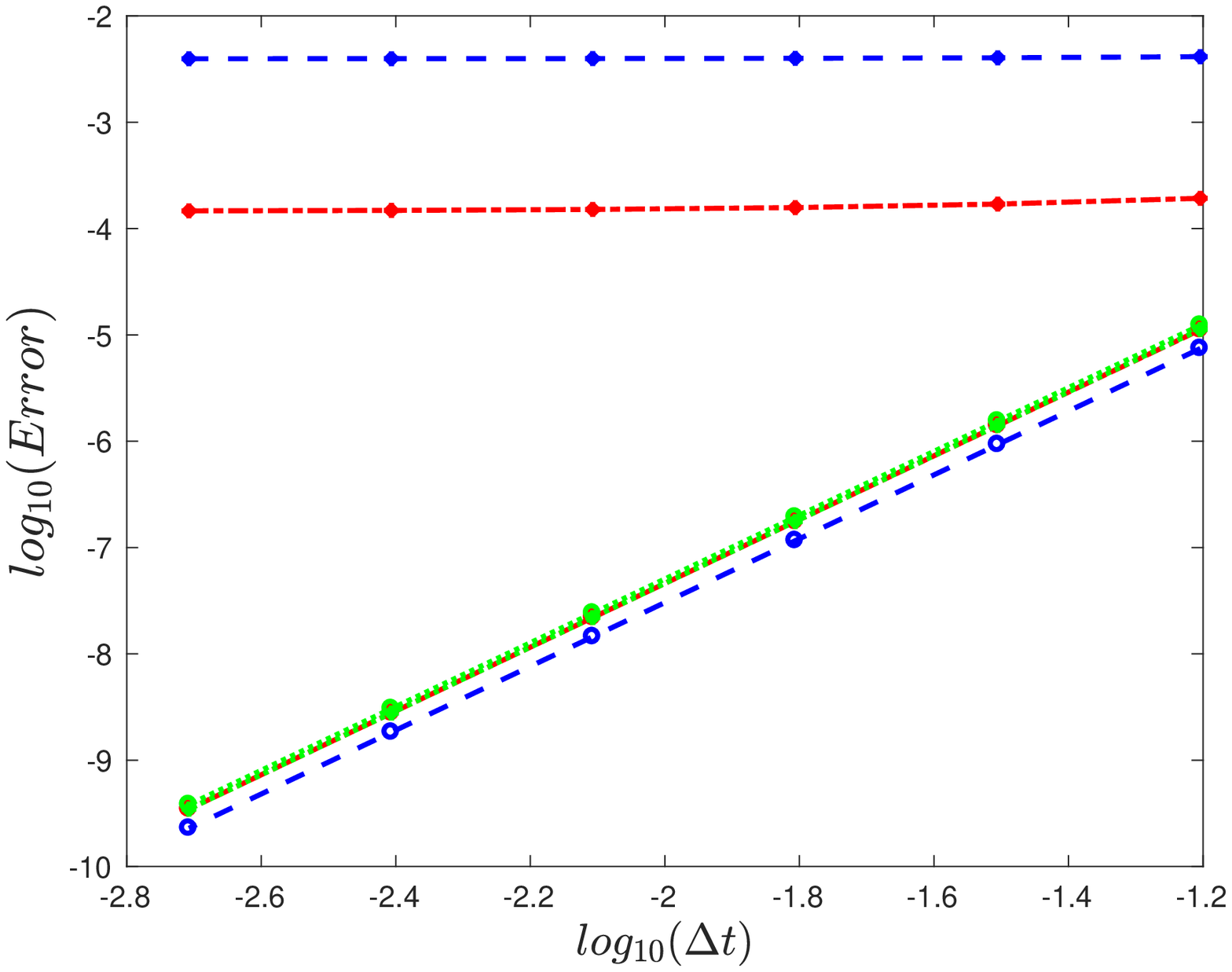} 
\includegraphics[scale=.425]{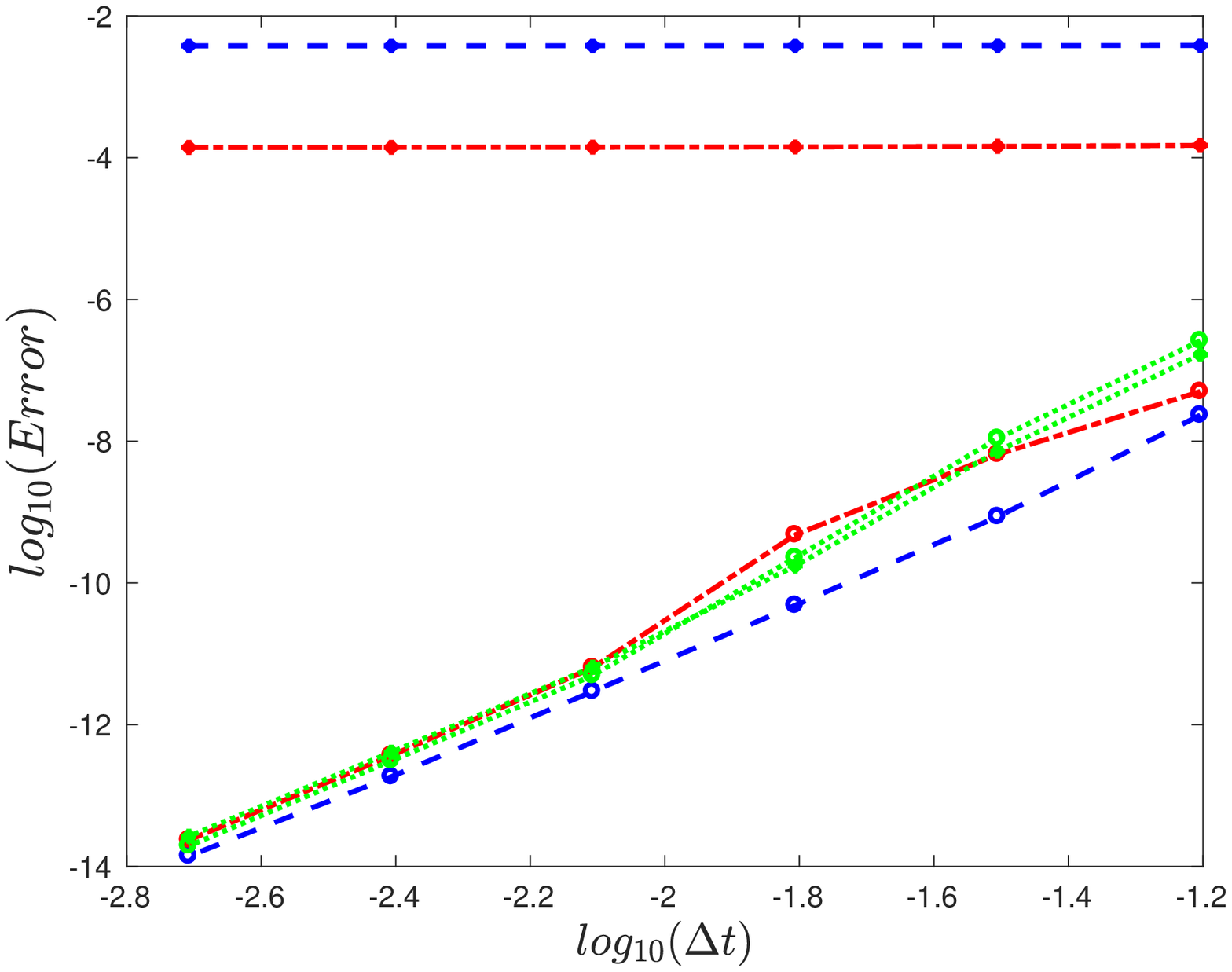} 
    \caption{\small  Test 2: log-log plot of the $L_2$ errors vs. the timestep using the 
     integrating factor Runge--Kutta method eSSPRK(s,p) with downwinding (with * markers) and the 
 integrating factor Runge--Kutta method  with non-decreasing abscissas eSSPRK+(s,p) (with o markers).
 The blue line is for the first order spatial operator $L1_{50}$, the red line is for the second order spatial operator $L2_{50}$, 
 and the green line for the spectral spatial operator $Lspec_{50}$.
 Left: $(s,p)=(3,3)$. Right: $(s,p)=(10,4)$. 
 Note that the markers look like solid circles because the * markers overlap with the o markers.
  \label{fig:accuracy2}}
\end{figure}

It is important to keep in mind that while the PDE is being approximated equally well whether downwinding is 
employed or not, this is not the case for convergence to the ODE. This is due to the fact that while the ODE
resulting from discretizing \eqref{PDEtest} with $L$ is 
\begin{align} \label{ODEtest1}
u_t  = L u + N(u)  
\end{align}
the ODE that results from discretizing \eqref{PDEtest} with $\tilde{L}$ is 
\begin{align} \label{ODEtest2}
u_t  = \tilde{L} u + N(u)  ,
\end{align}
which is a different ODE, with a correspondingly different solution.
%
We perform the following numerical test to see how downwinding impacts the solution to the ODE.

\noindent{\bf Test 2: ODE convergence study.} \\
We chose $N_x=50$ points and discretize the linear advection term using 
$L1_{50}$, $L2_{50}$, and $Lspec_{50}$. The nonlinear term is computed as above using WENO, but with 
$N_x=50$ points. 
The reference solution is found by evolving the ODE resulting from this semi-discretization 
using MATLAB's embedded ODE45 routine with $AbsTol=10^{-14}$ and $RelTol=5 \times 10^{-14}$.

 We evolve this semi-discrete ODE using the integrating factor approach based on the  eSSPRK(s,p) 
 methods with downwinding for the cases when the abscissas decrease, and using the integrating factor
 Runge--Kutta methods based on the eSSPRK+(s,p) methods, for 
 $\dt = \lambda \dx $ where $\lambda = \frac{1}{2}, \frac{1}{4}, \frac{1}{8},
\frac{1}{16}, \frac{1}{32}, \frac{1}{64}.$
For each value of $\dt$ we compute the error vector and calculate its $L_2$ norm. We compare the 
convergence of the integrating factor Runge--Kutta method eSSPRK(s,p) with downwinding to the 
 integrating factor Runge--Kutta method  with non-decreasing abscissas eSSPRK+(s,p),
 for $(s,p)=(3,3), (10,4)$.
 In Figure \ref{fig:accuracy2} we observe that when the number of points in space is fixed
 and only the time-step is refined, the   integrating factor  Runge--Kutta method with downwinding  
 has a large error which  the solution  hangs at about $3.8 \times 10^{-3}$ (blue).
 We repeat this test with a second order upwind operator  $L2_{50}$ for the linear advection operator and 
 observe that the solution  hangs at about $1.5 \times 10^{-4}$ (red). 
  In contrast, the methods without downwinding exhibit the expected order  of convergence in time, 
  regardless of the order of the spatial operator. 
  For the spectral operator, $L=\tilde{L}$, so as expected, there is no difference when downwinding is used.
  (Note that  the accuracy results when we use  eSSPRK(s,p) without downwinding are similar to those of   eSSPRK+(s,p)).
  This behavior is well-known in problems with downwinding and discussed extensively in \cite{Ketcheson2017}.

\section{Conclusions}  \label{sec:conclusions}
In \cite{SSPIFRK2018} we first considered the strong stability properties of integrating factor Runge--Kutta methods.
In that work we presented sufficient conditions for preservation of strong stability for integrating factor Runge--Kutta methods,
which required the use of explicit SSP Runge--Kutta methods with non-decreasing abscissas, denoted eSSPRK+ methods. 
When considering methods of order $p=3,4$   many of the eSSPRK+(s,p) methods have smaller SSP coefficients (and therefore smaller allowable time-step) than the optimal eSSPRK(s,p) methods, which often have
some decreasing abscissas.

 In this work, we consider a different approach to preserving the strong stability properties of 
 integrating factor Runge--Kutta methods. In this case, when the abscissas of the eSSPRK methods
 are decreasing, we replace the spatial operator $L$ with a downwinded spatial operator $\tilde{L}$ to
preserve the strong stability properties of integrating factor Runge--Kutta method. 
We presented a complete SSP theory for this approach.
However, our numerical examples show that the downwinded spatial operators introduce some errors that may
adversely affect the accuracy of the methods, and that in most cases the integrating factor approach
with explicit SSP Runge--Kutta methods with non-decreasing abscissas performs nearly as well, 
 if not better, than with explicit SSP Runge--Kutta methods with downwinding.
 These results lead us to conclude that the downwinding approach may not, in practice,
 provide much benefit over using explicit SSP Runge--Kutta methods with non-decreasing abscissas, as described in 
 \cite{SSPIFRK2018}.

{\bf Acknowledgment.} 
This publication is based on work supported by  AFOSR grant FA9550-15-1-0235
and NSF grant DMS-1719698.

\end{document}